\documentclass[a4paper,11pt]{amsproc} 
%\oddsidemargin  0.4 cm  
%\evensidemargin 0.4 cm   
%\textwidth     15.16 cm 
%\headsep        0.8 cm   
%\tracingstats=1         
%\usepackage{amscd}             
%\usepackage{xypic} 
\usepackage{amssymb}
\usepackage{amsthm}
\usepackage{epsf} 
\usepackage{bm}
\usepackage[dvipsnames]{xcolor}
\usepackage[T1]{fontenc}   
\usepackage{tikz}         
\makeindex
\newtheorem{thm}{Theorem}[section]

\def\c1{\operatorname{c_1}}
\def\c2{\operatorname{c_2}}

\def\S{{\mathbb S}}

\def\D{{\mathcal D}}
\def\B{{\mathcal B}}
\def\R{{\mathbb R}}

\def\F{{\mathcal F}}

\def\V{{\mathcal V}}

                   % product (fiber)
                  % dual

\def\+{\oplus}                   % direct sum
\def\*{\otimes}                  % tensor product
       % inclusion
       % ----->

\def\V{\mathcal{V}}

\def\P{\mathcal{P}}
\def\a{\alpha}
\hfuzz5pc

\begin{document}
\title[]{A generalization of Gerzon's bound on spherical $s$-distance sets}
%matrix weights of Delsarte codes}

\author[Datta]{Mrinmoy Datta}
\address{Department of Mathematics, %  \newline \indent
 IIT Hyderabad  \newline \indent %Troms{\o}, 
}
\email{mrinmoy.datta@math.iith.ac.in} %sudhirghorpade@gmail.com}
\thanks{The first named author is partially supported by a Seed Grant from Indian Institute of Technology Hyderabad, whereas the second named author is partially supported by a doctoral scholarship from Council of Scientific and Industrial Research, India.}

\author[Manna]{Subrata Manna}
\address{Department of Mathematics, %  \newline \indent
 IIT Hyderabad  \newline \indent %Troms{\o}, 
}
\email{subrata147314@gmail.com} %sudhirghorpade@gmail.com}
\thanks{}

\subjclass{05D10, 12D10, 52A40}
%\date{\today}

\begin{abstract}
We Use the method of linearly independent polynomials to derive an upper bound for the cardinality of a spherical $s$-distance set $\F$ where the sum of distinct inner products of any two elements from $\F$ is zero. Our result generalizes the well-known Gerzon's bound for the cardinality of an equiangular spherical set to a significantly broader class of spherical $s$-distance sets.  
\end{abstract}

\maketitle

\section{Introduction}
Let $n$ be a positive integer and we endow $\R^n$ with the Euclidean norm $|| . ||$. A finite subset $\F \subset \R^n$ is said to be an $s$-\textit{distance set} if the cardinality of $$\D(\F) = \left\{||x - y|| \mid x, y \in \F, x \neq y\right\}$$ is equal to $s$. An $s$-distance set $\F$ is called a \textit{spherical $s$-distance set} if $\F \subset \S^{n-1}$, where $\S^{n-1}$ denotes the unit sphere with center at origin in $\R^n$. The problems concerning the determination of the cardinality of a maximal $s$-distance set and a maximal spherical $s$-distance set in $\R^n$ have been well studied. The first known upper bound for the cardinality of a spherical $s$-distance set is due to Delsarte, Goethals and Seidel, who proved \cite{DGS} that such a set in $\R^n$ contains no more than ${n + s - 1 \choose s} + {n + s - 2 \choose s-1}$ points.

Several other bounds, albeit in special cases, are found in literature. A spherical $2$-distance set is called \textit{equiangular} if there exists $\a \in (0, 1]$ such that $\D(\F) = \{ \a, -\a\}$. Among various well-known results towards upper bounds for cardinalities of equiangular sets in $\R^n$, we are interested in generalizing a bound, popularly known as Gerzon's bound \cite[Theorem 3.5]{LS}, that tells us that an equiangular set in $\R^n$ contains at most ${n + 1 \choose 2}$ many points. 

Let $\F \subset \S^{n-1}$ be a spherical $s$-distance set. It follows readily that there is a one-to-one correspondence between the set 
$\D(\F)$ and $$\P(\F) = \left\{\langle x, y \rangle \mid x, y \in \F, x \neq y\right\},$$ where $\langle . , . \rangle$ denotes the standard inner product on $\R^n$. The Gerzon's bound for the cardinality of a spherical $2$-distance set was generalized by Musin. He proved \cite[Theorem 1]{M} that if $\F$ is a two distance set with $\P(\F) = \{t_1, t_2\}$ satisfying $t_1 + t_2 \ge 0$, then $|\F| \le {n+1 \choose 2}$.
This result was further extended by Barg and Musin to a far more general setting. They proved \cite[Theorem 15]{BM} that if $s$ is even and $t_1 + \dots + t_s \ge 0$, then the cardinality of a spherical $s$-distance set $\F \subset \R^n$ with $\P(\F) = \{t_1, \dots, t_s\}$ is bounded above by the quantity $BM (s, n)$, where 
\begin{equation}\label{bound:bm}
BM (s, n) = {n + s - 3 \choose s - 2} + {n+s - 4 \choose s - 3}+ \frac{n + 2s - 2}{s}{n + s - 3 \choose s - 1}.
\end{equation}
In a recent work \cite{H}, Heged\"us proved the following result: If $s$ is even, say $s = 2 \ell$ and $\F$ is a spherical $s$-distance set with $\P (\F) = \{\a_1, \dots, \a_{\ell}, -\a_1, \dots, - \a_{\ell}\}$ for some $\a_1, \dots, \a_{\ell} \in (0, 1]$, then $|\F| \le {n + s - 1 \choose s}$. Note that this result also generalizes the Gerzon's bound to a significantly broader setting, allowing $s$ to be any positive even integer, however,  restricting attention to a very special subcase of $t_1 + \dots + t_s = 0$. 

In this work, we are interested in determining a bound for the cardinality of a spherical $s$-distance set $\F$ in $\R^n$ satisfying the condition that $\sum_{t \in \P(F)} t = 0$. Thus as compared to the setting corresponding to the bound obtained by Barg and Musin, we relax the constraint on $s$ to be an even integer, but impose a condition on the sum of the $s$ distinct inner products that is akin to the setting of Gerzon's bound. Furthermore, we show at the end of this article that our bound (cf. Theorem \ref{main}) coincides with Barg and Musin's bound in the case when $s$ is even and $\sum_{t \in \P(\F)} t  = 0$. We shall make use of the method of linearly independent polynomials to prove our theorem. We remark that this method was previously used in  \cite{A, B, M} among various other articles.

This paper is organized as follows: In Section \ref{prel}, we introduce the notations and recall the preliminaries that will be used in proving the main theorem. We state and prove our main theorem in Section \ref{three} that ends with a comparison of our bound to that of Barg and Musin. 

\section{Preliminaries and notations}\label{prel}
In this section, we recall some basic facts that will be used in the proof of our main theorem. None of the results in this section are original in nature, but they have been collected for the ease of reference. 

\subsection{Polynomial functions on the unit circle}
Let $\S^{n-1}$ denote the unit sphere centered at origin in $\R^n$. This is given by the vanishing set of the polynomial $x_1^2 + \dots + x_n^2 - 1$ in $\R^n$. A function $\phi : \S^{n-1} \to \R$ is said to be a \textit{polynomial function (of degree $d$)} if there exists a polynomial $F \in \R[x_1, \dots, x_n]$ (with $\deg F = d$) such that $F (a_1, \dots, a_n) = \phi (a_1, \dots, a_n)$ for all $(a_1, \dots, a_n) \in \S^{n-1}$. It is easy to see that the set of all polynomial functions on $\S^{n-1}$ forms a ring that is isomorphic to the coordinate ring $\Gamma (\S^{n-1}) = \R[x_1, \dots, x_n]/I$ of $\S^{n-1}$, where $I$ denotes the ideal in $\R[x_1, \dots, x_n]$ consisting of all polynomials in $\R[x_1, \dots, x_n]$ that vanish at all the points in $\S^{n-1}$. It is not difficult to show that the set of monomials 
$$\B = \left\{x_1^{i_1}\cdots x_n^{i_n} \mid i_1 \le 1\right\}$$
forms a basis for the vector space $\Gamma (\S^{n-1})$ over $\R$. We denote by  $\Gamma_{\le d} (\S^{n-1})$ the subspace of $\Gamma (\S^{n-1})$ consisting of the polynomial functions of degree at most $d$. It is also readily verified that the set of monomials 
 $$\B_{\le d} (n) := \left\{x_1^{i_1}\cdots x_n^{i_n} \in \B \mid \sum_{j = 1}^n i_j \le d \right\}$$
form a basis for the subspace $\Gamma_{\le d} (\S^{n-1})$. We denote by $\Gamma_d (\S^{n-1})$ the subspace generated by $\B_d  (n)$, where 
$$\B_d (n) := \left\{x_1^{i_1}\cdots x_n^{i_n} \in \B \mid \sum_{j=1}^n i_j = d \right\}.$$
Clearly, $\B_d(n)$ is a basis for the subspace $\Gamma_d (\S^{n-1})$. For any integers $n$ and $d$, we define 
$$M_{\le d} (n) := {n + d - 1 \choose d} + {n + d - 2 \choose d- 1},$$
{and} 
$$ M_{d} (n) := {n + d - 2 \choose d} + {n + d - 3 \choose d- 1}.$$
It follows from a simple counting argument that 
\begin{equation}\label{dim}
\dim_{\R} \Gamma_{\le d} (\S^{n-1}) = M_{\le d} (n) \ \ \text{and} \ \ \dim_{\R} \Gamma_{d} (\S^{n-1}) = M_{d} (n).
\end{equation}

\subsection{Canonical reduction modulo $I$} \label{red} Let $M = x_1^{i_1} \cdots x_n^{i_n} \in \R [x_1, \dots, x_n]$ be a monomial. Write $i_1 = 2 t + r$, where $0 \le r \le 1$. We define the \textit{the canonical reduction of $M$ modulo $I$}, denoted by $\bar{M}$, as follows: 
$$\bar{M} = (1 - x_2^2 - \dots - x_n^2)^t x_1^r x_2^{i_2} \cdots x_n^{i_n}.$$
That is, $\bar{M}$ is obtained by replacing every occurrence of $x_1^2$ by $1 - x_2^2 - \dots - x_n^2$. We readily note that $\bar{M}$ can be written uniquely as an $\R$-linear combination of elements of $\B$. This reduction is extended linearly to all polynomials $f (x_1, \dots, x_n) \in \R [x_1, \dots, x_n]$. We have the following observations: 
\begin{enumerate}
%\item[(a)] $\deg f = \deg \bar{f}$.
\item[(a)] $f \equiv \bar{f} (\text{mod} I)$.
\item[(b)] $f (P) = \bar{f} (P)$ for all $P \in \S^{n-1}$. 
\end{enumerate}
We note that, under the canonical reduction, the degree of a polynomial remains unchanged. However, if $M$ is a monomial of degree $d$, and $\bar{M} = \sum {a_i} M_i$, where $a_i \in \R$ and $M_i \in \B$, then either  $\deg M_i = d$ or $\deg M_i \le d-2$, whenever $a_i \neq 0$.

\section{Main theorem}\label{three}
\subsection{Proof of main theorem} In proving our main theorem stated below, we shall make use of the following well-known equality of binomial coefficients. For two positive integers $r < n$, we have
\begin{equation}\label{one}
{n - 1 \choose r-1} + {n - 1 \choose r} = {n \choose r}.
\end{equation}
We now state and prove our main result. 
\begin{thm}\label{main}
Let $\F \subset \S^{n-1}$ be a spherical $s$-distance set with $\P (\F) = \{t_1, \dots, t_s\}$. 
If $t_1 + \dots + t_s = 0$, then $$|\F| \le {n + s - 1 \choose s} + {n + s - 4 \choose s - 3}.$$ 
\end{thm}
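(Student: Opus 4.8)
The plan is to use the polynomial method in the coordinate ring $\Gamma(\S^{n-1})$. For each $y \in \F$, define the function
$$
F_y(x) = \prod_{i=1}^{s} \bigl(\langle x, y\rangle - t_i\bigr)
$$
on $\S^{n-1}$. By construction $F_y(y) = \prod_i (1 - t_i) \neq 0$ (since each $t_i < 1$ because $x \neq y$ forces the inner product to be strictly less than $1$ on unit vectors), while $F_y(z) = 0$ for every $z \in \F \setminus \{y\}$. Hence the functions $\{F_y\}_{y \in \F}$ are linearly independent in $\Gamma(\S^{n-1})$, and $|\F|$ is bounded by the dimension of any subspace of $\Gamma(\S^{n-1})$ that contains all the $F_y$.

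The naive bound is $\dim \Gamma_{\le s}(\S^{n-1}) = M_{\le s}(n)$, which is too large; the hypothesis $t_1 + \dots + t_s = 0$ must be exploited to shave off a subspace. The key step is to expand $F_y(x) = \sum_{j=0}^{s} e_{s-j}(-t_1,\dots,-t_s)\,\langle x,y\rangle^{j}$ using elementary symmetric functions, and observe that $\sum t_i = 0$ kills the coefficient of the degree-$(s-1)$ term $\langle x, y\rangle^{s-1}$. So each $F_y$ lies in $\Gamma_{\le s}(\S^{n-1})$ but its degree-$(s-1)$ homogeneous part vanishes, i.e. $F_y \in \Gamma_s(\S^{n-1}) \oplus \Gamma_{\le s-2}(\S^{n-1})$. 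After applying the canonical reduction of Section~\ref{red} (which preserves membership in this span, since reducing a degree-$d$ monomial produces terms of degree $d$ or $\le d-2$), we get
$$
|\F| \;\le\; \dim_\R \Gamma_s(\S^{n-1}) + \dim_\R \Gamma_{\le s-2}(\S^{n-1}) \;=\; M_s(n) + M_{\le s-2}(n).
$$
The final step is a purely combinatorial identity: one checks, using \eqref{one} repeatedly, that
$$
M_s(n) + M_{\le s-2}(n) = {n+s-1 \choose s} + {n+s-4 \choose s-3},
$$
which is the claimed bound.

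I expect the main obstacle to be the arithmetic of the last step — reorganizing $\binom{n+s-2}{s} + \binom{n+s-3}{s-1} + \binom{n+s-3}{s-2} + \binom{n+s-4}{s-3}$ into the stated closed form. Grouping the first three terms via two applications of Pascal's rule \eqref{one} collapses $\binom{n+s-2}{s} + \binom{n+s-3}{s-1}$ and then the result with $\binom{n+s-3}{s-2}$ should telescope toward $\binom{n+s-1}{s}$, leaving the leftover $\binom{n+s-4}{s-3}$ to be matched; care is needed with small values of $s$ (e.g. $s = 1, 2$) where some binomial coefficients degenerate, and one should confirm the hypothesis forces $s \ge 1$ with the bound still valid — indeed for $s=2$ this recovers Gerzon's $\binom{n+1}{2}$. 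A secondary point requiring a line of justification is that $t_i \neq 1$ for all $i$, guaranteeing $F_y(y) \neq 0$; this is immediate since for distinct unit vectors $\langle x, y\rangle \in [-1,1)$.
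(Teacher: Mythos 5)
Your proposal is correct and follows essentially the same route as the paper: the same product polynomials $\prod_k(\langle x,y\rangle-t_k)$, linear independence via evaluation at the points of $\F$, the observation that $\sum t_i=0$ kills the degree-$(s-1)$ part (which survives canonical reduction since a degree-$d$ monomial reduces to terms of degree $d$ or $\le d-2$), and the same Pascal-rule telescoping at the end. The only cosmetic difference is that you bound $|\F|$ by $\dim\bigl(\Gamma_s\oplus\Gamma_{\le s-2}\bigr)=M_s(n)+M_{\le s-2}(n)$, whereas the paper equivalently subtracts $\dim\Gamma_{s-1}$ from $\dim\Gamma_{\le s}$ after noting the span of the $\bar f_i$ meets $\Gamma_{s-1}$ trivially; both computations yield $\binom{n+s-1}{s}+\binom{n+s-4}{s-3}$.
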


\begin{proof}
Let $\F = \{v_1, \dots, v_m\}$. For each $i = 1, \dots, m$, we define 
$$f_i (x) = {\frac{\displaystyle{\prod_{k = 1}^s \left(\langle x, v_i \rangle - t_k \right)}}{\displaystyle{\prod_{k = 1}^s \left(1 - t_k \right)}}}.$$ 
It is obvious that $f_i (v_j) = \delta_{ij}$ where
$$\delta_{ij} =
\begin{cases}
 1 \ \ \ \text{if} \ \ \ i = j  \\
 0 \ \ \ \text{if} \ \ \ i \neq j.
\end{cases}$$
It follows from the observation made in Subsection \ref{red} that $\bar{f}_i \in \Gamma_{\le s} (\S^{n-1})$ for all $i = 1, \dots, m$ and that $\bar{f_i}(v_j) = \delta_{ij}$. Consequently, $\bar{f_1}, \dots, \bar{f_m}$ are linearly independent elements of $\Gamma_{\le s} (\S^{n-1})$ over $\R$. Let $\V$ denote the subspace of $\Gamma_{\le s} (\S^{n-1})$ spanned by $\bar{f_1}, \dots, \bar{f_m}$.

\textbf{Claim:} $\V \cap \Gamma_{s-1} (\S^{n-1}) =\{0\}$.

\textit{Proof of claim:} Let $c$ denote the constant $\prod_{k=1}^s (1 - t_k)$. Since $t_1 + \dots + t_s = 0$, each $cf_i$ is a linear combination of monomials of degree $s, s-2, s - 3, \dots, 0$. Furthermore, from the observations made in Subsection \ref{red}, we see that $\bar{cf_i}$ is also a linear combination of monomials of degree $s, s-2, s-3. \dots, 0$ for all $i = 1, \dots, m$.  The claim follows since no monomial of degree $s-1$ occur in the representation of $\bar{cf_i}$.

Since $\V$ and $\Gamma_{s-1}(\S^{n-1})$ are subspaces of $\Gamma_{\le s} (\S^{n-1})$, and they are disjoint, we see that
$\dim_\R \V + \dim_\R \Gamma_{s-1}(\S^{n-1}) \le \dim_\R \Gamma_{\le s} (\S^{n-1})$. Using equation \eqref{dim}, we derive
\begin{align*}
|\F| = m & \le {n+s - 1 \choose s} + {n + s - 2 \choose s-1} - {n + s - 3 \choose s-1} - {n+s-4 \choose s-2} \\
& = {n+s - 1 \choose s} + {n + s - 3 \choose s - 2} - {n + s - 4 \choose s - 2} \\
& = {n + s - 1 \choose s} + {n + s - 4 \choose s - 3}.
\end{align*}
The two equalities above follow from equation \eqref{one}. 
\end{proof}

\subsection{Comparison with other bounds} It follows trivially that our bound coincides with Gerzon's bound in the case when $s = 2$. We show that if $s$ is even, and $t_1 + \dots + t_s = 0$, then our bound is equal to the bound obtained by Barg and Musin. To this end, we need to prove the following combinatorial equality:

\begin{equation}
{n + s - 3 \choose s - 2} + \frac{n + 2s - 2}{s}{n + s - 3 \choose s - 1} = {n + s - 1 \choose s}
\end{equation}

To prove the above equality, we see that 
\begin{align*}
& \ {n + s - 1 \choose s} - {n + s - 3 \choose s - 2}  \\
&= {n + s - 1 \choose s} - {n + s - 2 \choose s - 1} + {n + s - 2 \choose s - 1} - {n + s - 3 \choose s - 2} \\
&= {n + s - 2 \choose s} + {n + s - 3 \choose s - 1} \\
&= \frac{(n + s - 2) !}{s! \ (n - 2) !} +  \frac{(n + s - 3) !}{(s - 1)! \ (n - 2) !} \\
&=\frac{(n + s - 3) !}{(s - 1)! \ (n - 2) !} \left(\frac{n + s - 2}{s} + 1 \right) \\
&= \frac{n + 2s - 2}{s} {n + s - 3 \choose s - 1}.
\end{align*}

This concludes the proof of our claim. It will indeed be interesting to investigate whether there exists a spherical $s$-distance set $\F$ with $\sum_{t \in \P(\F)} = 0$ that attains our bound.

\vspace{.5cm}

\end{document}